\newtheorem{thm}{Theorem}[section]
\newtheorem{lem}[thm]{Lemma}
\theoremstyle{definition}
\newtheorem{defn}[thm]{Definition}
\theoremstyle{remark}
\newtheorem{rem}[thm]{Remark}
\newtheorem{example}[thm]{Example}
\numberwithin{equation}{section}
\newcommand{\norm}[1]{\left\Vert#1\right\Vert}
\newcommand{\abs}[1]{\left\vert#1\right\vert}
\newcommand{\set}[1]{\left\{#1\right\}}
\newcommand{\eps}{\varepsilon}
\newcommand{\To}{\longrightarrow}
\newcommand{\cA}{\mathcal A}
\newcommand{\cF}{\mathcal F}
\newcommand{\cG}{\mathcal G}
\newcommand{\fA}{\mathfrak A}
\newcommand{\sub}{\subseteq}
\begin{document}

\title[Chain conditions in free Banach lattices]{Chain conditions in free Banach lattices}
\author[A.\ Aviles]{Antonio Avil\'es}
\address{Universidad de Murcia, Departamento de Matem\'{a}ticas, Campus de Espinardo 30100 Murcia, Spain.}
\email{avileslo@um.es}

\author[G.\ Plebanek]{Grzegorz Plebanek}
\address{Uniwersytet Wroc{\l}awski, Instytut Matematyczny, Pl. Grunwaldzki 2/4, 50-384 Wroclaw, Poland.}
\email{grzes@math.uni.wroc.pl}

\author[J.D. Rodr\'iguez Abell\'an]{Jos\'e David Rodr\'iguez Abell\'an}
\address{Universidad de Murcia, Departamento de Matem\'{a}ticas, Campus de Espinardo 30100 Murcia, Spain.}
\email{josedavid.rodriguez@um.es}

\subjclass[2010]{46B42}
\keywords{Free Banach lattice; countable chain condition; Knaster's condition; $sigma$-bounded chain condition; positively homogeneous continuous functions}
\thanks{
The research was done during the second author's stay at the University of Murcia,
supported by { Fundaci\'on S\'eneca} -- Agencia de Ciencia y Tecnolog\'{i}a de la Regi\'{o}n de Murcia,
 through its Re\-gio\-nal Programme {\em Jim\'enez de la Espada}. The first and third authors are supported by projects MTM2014-54182-P and MTM2017-86182-P (MINECO,AEI/FEDER, UE). The first author is also supported by 19275/PI/14 (Fundaci\'on S\'eneca).}
\begin{abstract}

We show that for an arbitrary Banach space $E$, 
the free Banach lattice $FBL[E]$ generated by $E$ satisfies the $\sigma$-bounded chain condition.

\end{abstract}

\maketitle

\section{Introduction}

The purpose of this paper is to investigate what chain conditions hold in  free Banach lattices generated by  Banach spaces.
 The concept of a Banach lattice freely generated by a given Banach space
  has been recently introduced and investigated by Avil\'es, Rodr\'iguez and Tradacete  \cite{ART17}. 
 
 Consider any Banach space $E$.
  Roughly speaking, the free Banach lattice generated by $E$ is a Banach lattice $F$ which contains a subspace linearly isometric with $E$ in such a way that its elements work as lattice-free generators. More formally,  the lattice $F$ has the following properties:

\begin{enumerate}[(i)]
\item there is a linear isometry $\phi:E\rightarrow F$ into its image;
\item for every Banach lattice $X$ and every bounded operator $T:E\rightarrow X$, there is a unique lattice homomorphism $\hat T:F\rightarrow X$ such that $\|\hat T\|=\|T\|$ and the following diagram commutes
$$\xymatrix{E\ar_{\phi}[d]\ar[rr]^T&&X\\
F\ar_{\hat{T}}[urr]&& }$$
\end{enumerate}

Those properties uniquely determine $F$ up to Banach lattice isometry, and so we can speak of  \emph{the} free Banach lattice generated by $E$, denoted by $FBL[E]$. This definition generalizes the notion of a free Banach lattice generated by a set $\Gamma$,  previously introduced by de Pagter and Wickstead \cite{dPW15}. Namely, the free Banach lattice generated by a set $\Gamma$ is the free Banach lattice generated by the Banach space $\ell_1(\Gamma)$, see Corollary 2.8 in \cite{ART17}.

Let us recall that the countable chain condition \emph{ccc} and  its various strengthenings,  typically considered in the context of Boolean algebras or topological spaces, in a more general setting
define combinatorial properties of partially ordered sets, see e.g.\ Todorcevic's survey article \cite{To00}.
Given a Banach lattice $X$, it is natural to discuss chain conditions of the partially ordered set $X^+$ of positive elements of the lattice. We shall  consider the following chain conditions formed in this way.

\begin{defn}\label{def}
We say that a Banach lattice $X$

\begin{enumerate}[(i)]
\item satisfies the {\em countable chain condition} (\emph{ccc}) if  for every uncountable family $\cF \subset X^+$ there are distinct $f,g\in \cF$ such that $f \wedge g \neq 0$;
\item satisfies {\em Knaster's condition} $K_2$ if  every uncountable family $\cF \subset X^+$  
 contains an uncountable family $\cG$ with the property that $f \wedge g \neq 0$ for every $f,g\in\cG$;
 \item satisfies the {\em $\sigma$-bounded chain condition ($\sigma$-bcc)}  if  $X^+$ admits a countable decomposition $X^+ = \bigcup_{n\ge 2} \cF_n$ such that, for every $n$,
 in every subset $\mathcal{G}\subset \cF_n$ of size $n$ there are two distinct elements $f,g\in \mathcal{G}$ such that $f\wedge g \neq 0$.
\end{enumerate}
\end{defn}

We have listed  those chain conditions  according to their increasing strength; in fact,   the implications
\[\sigma\mbox{\em -bcc}\Longrightarrow K_2\Longrightarrow  \mbox{\em ccc},\]
are valid for arbitrary partially ordered sets.
While it is clear that $K_2$ implies \emph{ccc}, the first implication is less obvious. Nonetheless,   the $\sigma$-bounded chain condition  does imply  $K_2$, as a consequence of the Dushnik-Miller partition theorem, cf.\
\cite[page 52]{To00}. We are grateful to Stevo Todorcevic for bringing this fact to our attention. The previous version of our paper contained  a separate argument that 
the lattice $FBL[E]$ satisfies Knaster's condition under an additional assumption that   $E$ is weakly compactly generated.
The role of the $\sigma$-bounded chain condition is briefly discussed in the final section below.

De Pagter and Wickstead \cite{dPW15} showed that the free Banach lattice $FBL[\ell_1(\Gamma)]$ generated by any set $\Gamma$ is always \emph{ccc}.
This is in analogy with the well-known property of  free Boolean algebras, which satisfy the countable chain condition regardless of their size (\cite[Chapter 4, Corollary 9.18]{Handbook}).
Assuming some linear and metric restrictions does not seem to help in constructing large sets of disjoint elements, and for this reason it is natural to guess that the free Banach lattice generated by any Banach space $E$ should also be \emph{ccc}. Although the original proof from \cite{dPW15} does not admit a straightforward generalization, we shall prove in this note that this is the case; in fact our main results reads as follows.

\begin{thm}\label{main1}
For every Banach space $E$,  the free Banach lattice $FBL[E]$ satisfies the  $\sigma$-bounded chain condition.
\end{thm}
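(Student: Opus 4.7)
The plan is to exhibit an explicit countable partition $FBL[E]^+=\{0\}\cup\bigcup_{n\ge 2}\cF_n$ and verify the required pairwise-disjoint bound in each class. I would begin from the concrete representation of $FBL[E]$ proved in \cite{ART17}, namely as a sublattice of positively homogeneous continuous functions $f\colon E^*\to \Real$ with the dual norm
\[\|f\|_{FBL} = \sup\Bigl\{\sum_{k=1}^m |f(x_k^*)| : m\in\nat,\ x_k^*\in E^*,\ \sup_{x\in B_E}\sum_{k=1}^m |x_k^*(x)|\le 1\Bigr\}.\]
In this picture, lattice operations are pointwise, so for $f,g\in FBL[E]^+$ one has $f\wedge g\ne 0$ exactly when there exists $x^*\in E^*$ with $f(x^*)>0$ and $g(x^*)>0$; the task reduces to controlling the intersection pattern of the ``positivity cones'' $\{f>0\}\subset E^*$ across uncountable families.

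For each nonzero $f\in FBL[E]^+$, non-triviality and positive homogeneity give some $x^*\in B_{E^*}$ with $f(x^*)>0$; rationalising $\|f\|_{FBL}$ and $f(x^*)$ yields two positive integer parameters classifying $f$. Accordingly, for $k,m\ge 1$ I would set
\[\cG_{k,m} = \bigl\{f\in FBL[E]^+ : \|f\|_{FBL}\le k\text{ and there exists }x^*\in B_{E^*}\text{ with }f(x^*)\ge 1/m\bigr\},\]
so that $FBL[E]^+\setminus\{0\}=\bigcup_{k,m}\cG_{k,m}$. Re-indexing the countable family $(\cG_{k,m})$ as $(\cF_n)_{n\ge 2}$, with each $\cG_{k,m}$ receiving an index $n$ strictly greater than a bound on the size of any pairwise disjoint subfamily of $\cG_{k,m}$, will then yield the required decomposition, conditional on such a bound being available.

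The main work, which I expect to be the essential obstacle, is the quantitative step: showing that any pairwise disjoint family $f_1,\dots,f_N\in \cG_{k,m}$, with witnesses $x_i^*\in B_{E^*}$ such that $f_i(x_i^*)\ge 1/m$, must have $N\le N(k,m)$ for some function $N(\cdot,\cdot)$. Pairwise disjointness forces $f_j(x_i^*)=0$ for $i\ne j$, so applying the dual formula to $g=\sum_i f_i$ with test sequence $(x_i^*)_i$ gives
\[ Nk\ \ge\ \|g\|_{FBL}\ \ge\ \frac{N/m}{\sup_{x\in B_E}\sum_i |x_i^*(x)|},\]
together with analogous one-point estimates for each $\|f_i\|_{FBL}$. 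The naive inequality $\sup_{x\in B_E}\sum_i|x_i^*(x)|\le N$ on the denominator gives no useful bound on $N$, so a sharper estimate is required. A promising route is to randomise signs or weights in the test sequence (in the spirit of Khintchine-type inequalities), exploiting the pointwise disjointness of supports to decouple contributions and produce a lower bound on $\|g\|_{FBL}$ that grows faster than $N$; an alternative route is to use structural information from \cite{ART17}, such as the fact that each element of $FBL[E]$ is captured by countably much of the data of $E^*$, in order to reduce to a separable situation and then run a pigeonhole or $\Delta$-system argument refining the \emph{ccc} proof of \cite{dPW15}. Once such a bound $N(k,m)$ is secured, the indexing of $(\cF_n)$ is chosen so as to strictly exceed it, and the $\sigma$-bcc property follows immediately from the definition.
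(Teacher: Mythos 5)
Your reduction to the function representation, and the remark that for positive elements disjointness means the sets $\{f>0\}$ and $\{g>0\}$ meet nowhere, are fine and agree with the paper's starting point. But the heart of your plan --- that the classes $\cG_{k,m}=\{f:\|f\|_{FBL[E]}\le k,\ \exists x^*\in B_{E^*}\ f(x^*)\ge 1/m\}$ admit a bound $N(k,m)$ on the size of pairwise disjoint subfamilies --- is not merely unproved; it is false, already for $E=\Real^2$ with the Euclidean norm. Take norm-one vectors $a,b\in E$ at angle $\pi-2\theta$ and put $f=\frac{1}{\sin\theta}\bigl((\delta_a\wedge\delta_b)\vee 0\bigr)$, a lattice expression in the generators, hence in $FBL[E]$. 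It is supported on the angular sector $S$ of half-width $\theta$ where both functionals are positive, its supremum over $B_{E^*}$ equals $1$ (attained at the bisector direction), and its free norm is small: $f(x^*)\le\|x^*\|$ by homogeneity, and if $\sup_{x\in B_E}\sum_j|x_j^*(x)|\le 1$ then testing with the unit vector $u$ in the central direction of $S$, where $x^*(u)\ge\|x^*\|\cos\theta$ for $x^*\in S$, gives $\sum_j f(x_j^*)\le\sum_{x_j^*\in S}\|x_j^*\|\le\frac{1}{\cos\theta}\sum_j|x_j^*(u)|\le 2$ for small $\theta$. Rotating the sector yields, for every $N$, a family of $N$ pairwise disjoint elements of $\cG_{2,2}$. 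So no bound depending only on the norm and on a lower bound for $\sup_{B_{E^*}}f$ exists, and the routes you sketch to obtain one (sign randomisation in the dual formula, or a separable reduction plus a $\Delta$-system, which at best gives ccc-type information about uncountable families rather than the finite bounds that $\sigma$-bcc requires) cannot rescue this particular decomposition.

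What the quantitative step really needs is to build the local weak$^*$-continuity data of $f$ at its witness point into the classification, and this is exactly what the paper does: for each $f$ one fixes $x_f^*$ with $\|x_f^*\|\le 1/3$ and $|f(x_f^*)|>\eps$, together with a basic weak$^*$ neighbourhood $U_f$ of $x_f^*$ determined by $n$ vectors of $B_E$ and a rational $\delta$, on which $f$ stays within $\eps/2$ of $f(x_f^*)$; the countably many classes are indexed by the pairs $(n,\delta)$. In the $\Real^2$ example above, narrower sectors force worse continuity data, so those functions are scattered into different classes. The bound for each class then comes from a combinatorial ingredient for which your outline has no substitute: a Ramsey-type lemma producing, among any $N=N(n,\delta)$ members of a class with witnesses $x_0^*,\dots,x_{N-1}^*$, indices $i<j<k$ such that $y^*=x_i^*-x_j^*+x_k^*$ lies in $U_i\cap U_k$ (here the factor $1/3$ is what keeps $y^*$ inside $B_{E^*}$), whence $|f_i(y^*)|>\eps/2$ and $|f_k(y^*)|>\eps/2$, i.e.\ $f_i\wedge f_k\neq 0$. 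Without both modifications --- the finer decomposition and the Ramsey argument --- the proposal does not go through.
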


We shall use the explicit description of $FBL[E]$  provided in \cite{ART17}. For a function $f:E^\ast \To \mathbb{R}$ define a norm

$$ \norm{f}_{FBL[E]} = \sup_{n \in \mathbb{N}} \set{\sum_{i = 1}^n \abs{f(x_{i}^{*})} : x_1^{*}, \ldots, x_n^{*} \in E^{*},\text{ }\sup_{x \in B_E} \sum_{i=1}^n \abs{x_i^{*}(x)} \leq 1 }.$$

We define $FBL[E]$ to be the closure of the vector lattice in $\mathbb R^{E^*}$ generated by the evaluations $\delta_x: x^\ast \mapsto x^\ast(x)$ with $x\in E$. These evaluations form the natural copy of $E$ inside $FBL[E]$. All the functions in $FBL[E]$ are positively homogeneous (that is, $f(rx^\ast) = r f(x^\ast)$ for all $x^\ast\in E^\ast$ and $r>0$) and $weak^\ast$-continuous  when restricted to the closed unit ball $B_{E^\ast}$. So, there is a natural inclusion
$$FBL[E] \subset C_{ph}(B_{E^\ast}),$$
where the right-hand side is the set of all weak$^\ast$-continuous and positively homogeneous functions on $B_{E^\ast}$. This inclusion preserves the order relation $\leq$ and the infimum and supremum operations $(\wedge,\vee)$, that are always defined pointwise.
Theorem \ref{main1} follows from Theorem~\ref{mainhomo} below, because the $\sigma$-bounded chain condition is transferred by the inclusion mentioned above.

\begin{thm}\label{mainhomo}
	The lattice $C_{ph}(B_{E^\ast})$ satisfies  the $\sigma$-bounded chain condition for every Banach space $E$.
\end{thm}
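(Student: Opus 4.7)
The plan is to assign to each nonzero $f \in C_{ph}(B_{E^\ast})^+$ a countable ``rational signature'' derived from a local weak$^\ast$ witness to the values where $f$ is largest, and then reassemble the resulting countable classification into the required $\sigma$-bcc decomposition.

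First I would reduce to a fixed dyadic scale by partitioning $C_{ph}(B_{E^\ast})^+\setminus\{0\}$ according to $\|f\|_\infty \in [2^m, 2^{m+1})$ for $m \in \mathbb{Z}$; since lattice disjointness is invariant under positive scaling, it suffices to work within one bucket and reindex. Within a bucket, weak$^\ast$-compactness of $B_{E^\ast}$ yields a point $x_f^\ast \in B_{E^\ast}$ attaining $\|f\|_\infty$, and uniform weak$^\ast$-continuity of $f$ on the compact uniform space $B_{E^\ast}$ produces a basic weak$^\ast$-open neighborhood
\[
V_f = \bigl\{y^\ast \in B_{E^\ast} : \abs{y^\ast(y_i^f) - a_i^f} < \eta_f, \; i = 1,\ldots,k_f\bigr\}
\]
of $x_f^\ast$, with $y_i^f \in B_E$, on which $f > \|f\|_\infty/2$. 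After a slight shrinkage one may take $\eta_f \in \mathbb{Q}^+$ and $a_i^f \in \mathbb{Q}$. The signature $\sigma_f = (k_f, \eta_f, a_1^f, \ldots, a_{k_f}^f)$ then lies in a countable set $\Sigma$, yielding a countable classification $C_{ph}(B_{E^\ast})^+\setminus\{0\} = \bigcup_{\sigma\in\Sigma}\cC_\sigma$.

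The crux is then to prove that each class $\cC_\sigma$ admits lattice antichains of only some finite size $N(\sigma)$. Granted such bounds, one obtains the required decomposition $\bigcup_{n\ge 2}\cF_n$ by a greedy bijective enumeration: process the classes in some order and place $\cC_\sigma$ into the smallest unused slot $n > N(\sigma)$, leaving the remaining slots empty (which is harmless since the $\sigma$-bcc condition is vacuous on empty classes). Each $\cF_n$ is then a single class $\cC_\sigma$ with $N(\sigma) < n$, hence has antichain bound below $n$, and the union still covers everything.

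The hard part will be the boundedness of antichains within each signature class. For pairwise lattice-disjoint $f_1, \ldots, f_m \in \cC_\sigma$ the neighborhoods $V_{f_1}, \ldots, V_{f_m}$ are pairwise disjoint; a Hahn-Banach argument shows that for every pair $j\ne l$ some coordinate $i$ must satisfy $\|y_i^j - y_i^l\| \ge 2\eta_f$ in $E$. This alone is insufficient when $E$ is non-separable, since $B_E$ then carries uncountable $2\eta_f$-separated sets. The resolution must refine the signature so as to fully exploit the positive homogeneity of $f$: because $\|x_f^\ast\| = 1$, one can arrange a distinguished coordinate $y_1^f \in B_E$ with $a_1^f > 1-\eta_f$, which forces every $y^\ast \in V_f$ to satisfy $y^\ast(y_1^f) > 1-2\eta_f$. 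Pairwise disjointness then imposes the much stronger condition $\|y_1^j + y_1^l\| < 2-4\eta_f$ on the distinguished directions, and I expect the final antichain bound to follow from a careful finite-dimensional Banach-space geometric argument on ``almost-norming'' directions in $B_{E^\ast}$, possibly supplemented by a Ramsey or $\Delta$-system step to pass from the crude rational signature to a refined one adequate for the bound.
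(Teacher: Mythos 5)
Your set-up (countable classification by a rational ``signature'' of a basic weak$^\ast$ neighbourhood on which $f$ is large, plus the bookkeeping that turns per-class bounds $N(\sigma)$ into a $\sigma$-bcc decomposition) is sound and parallels the paper's. The genuine gap is that the heart of the matter --- a uniform finite bound on the size of lattice antichains inside one signature class --- is never proved, and the route you sketch for it cannot work as stated. First, the claim that lattice-disjointness of $f_j,f_l$ forces $\|y_1^j+y_1^l\|<2-4\eta$ is unjustified: a functional $y^\ast$ almost norming $y_1^j+y_1^l$ only controls the distinguished coordinate, while membership in $V_{f_j}$ (hence positivity of $f_j$ at $y^\ast$) requires \emph{all} $k_f$ coordinate conditions $|y^\ast(y_i^j)-a_i|<\eta$. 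Second, and more importantly, even granting all the necessary conditions you extract (pairwise $\eta$-separated coordinate vectors, almost-antipodality bounds on the distinguished directions), they impose no finite bound at all: in a Hilbert space an orthonormal family of distinguished directions satisfies every one of these constraints in arbitrarily large (indeed infinite) number, so no ``finite-dimensional geometric argument on almost-norming directions'' can produce $N(\sigma)$ from them. Your proposal thus reduces the theorem to a statement that is at least as hard as the theorem itself, with no mechanism to prove it.

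The missing idea is to use positive homogeneity in the opposite direction from the one you chose. Rather than placing the witness point $x_f^\ast$ on the unit sphere (a norm-attaining point for $f$), the paper picks $x_f^\ast$ of norm at most $1/3$ with $|f(x_f^\ast)|>\eps$ (possible for every nonzero $f\in C_{ph}(B_{E^\ast})^+$ precisely by positive homogeneity), together with a basic neighbourhood $U_f$ of $x_f^\ast$ determined by $n$ vectors of $B_E$ and a rational $\delta$ on which $f$ stays within $\eps/2$ of $f(x_f^\ast)$; the class is indexed by $(n,\delta)$. Then, instead of trying to derive geometric constraints on the defining directions, one directly \emph{constructs} a point where two of the functions are simultaneously nonzero: discretizing $[-1,1]$ into $m<2/\delta+1$ intervals and colouring ordered pairs $(a,b)$ by the pattern of $x_b^\ast$ on the $n$ directions of $f_a$, a Ramsey-type lemma yields $i<j<k$ with $c(i,j)=c(i,k)$ and $c(k,i)=c(k,j)$, which makes $y^\ast=x_i^\ast-x_j^\ast+x_k^\ast$ land in $U_i\cap U_k$; the point $y^\ast$ lies in $B_{E^\ast}$ only because the three centres have norm $\le 1/3$ --- exactly what your choice $\|x_f^\ast\|=1$ rules out. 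This gives $f_i\wedge f_k\neq 0$ and the finite bound $N$ depending only on $(n,\delta)$, which is the step your proposal leaves open.
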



\section{The proof}

In the sequel, we often identify a natural number $n$ with the set $\{0,1,2,\ldots, n-1\}$.
For any set  $A$ and $s \in \mathbb{N}$, we use the following standard notation:
$[A]^s = \set{B \subset A : \abs{B} = s}$.

We start by recalling the classical Ramsey theorem \cite[Corollary 1.4]{To10} which we use in the proof of Lemma \ref{LemaRamsey}
below.

\begin{thm}(Ramsey)\label{Ramsey}
Given $p,\, q,\, r \in \mathbb{N}$, with $p \leq r$, there exists $N = N(p,q,r) \in \mathbb{N}$ such that for every map $$\psi: \left[N\right]^p \longrightarrow q$$ there exists $B \in \left[N\right]^r$ such that $\psi \vert_{[B]^p}$ is constant.
\end{thm}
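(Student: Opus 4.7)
The plan is to prove Ramsey's theorem by induction on the exponent $p$, using the classical ``peeling off the minimum'' construction that reduces a coloring of $p$-subsets to repeated applications of the theorem for $(p-1)$-subsets. The base case $p = 1$ is the pigeonhole principle: any map $\psi: N \to q$ with $N \geq q(r-1)+1$ admits a preimage of size at least $r$, so one may take $N(1, q, r) := q(r-1)+1$.

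For the inductive step, I would assume $N(p-1, q, \cdot)$ exists, fix $p, q, r$ with $p \leq r$, and set $M := q(r-1) + 1$. The goal is to construct $N = N(p, q, r)$ together with a descending chain $[N] = X_0 \supset X_1 \supset \cdots \supset X_{M-1}$ of subsets of $[N]$ and elements $a_i := \min X_i$, chosen so that for each $i < M-1$ the auxiliary coloring
\[
\chi_i : [X_i \setminus \{a_i\}]^{p-1} \longrightarrow q, \qquad \chi_i(S) := \psi(\{a_i\} \cup S),
\]
is constant on $[X_{i+1}]^{p-1}$ with some value $\gamma_i \in q$. The existence of such an $X_{i+1}$ inside $X_i \setminus \{a_i\}$ is exactly what the inductive hypothesis supplies, provided $|X_i|$ is large enough; working backward from the requirement $|X_{M-1}| \geq 1$ defines $N$ via an explicit recursion, namely $N := n_0$ where $n_{M-1} := 1$ and $n_i := N(p-1, q, n_{i+1}) + 1$.

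The crucial consequence of the construction is that for every choice $i_0 < i_1 < \cdots < i_{p-1}$ from $\{0, 1, \ldots, M-1\}$, one has $\psi(\{a_{i_0}, a_{i_1}, \ldots, a_{i_{p-1}}\}) = \gamma_{i_0}$, because each $a_{i_j}$ with $j \geq 1$ lies in $X_{i_j} \subset X_{i_0 + 1}$, so $\{a_{i_1}, \ldots, a_{i_{p-1}}\} \in [X_{i_0+1}]^{p-1}$, on which $\chi_{i_0}$ is identically $\gamma_{i_0}$ by construction. To finish, I apply the pigeonhole principle to the sequence $\gamma_0, \ldots, \gamma_{M-1}$ of length $M = q(r-1)+1$ to obtain a color $c \in q$ and indices $j_1 < \cdots < j_r$ with $\gamma_{j_k} = c$ for every $k$. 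Then $B := \{a_{j_1}, \ldots, a_{j_r}\}$ has size $r$, and since the minimum of any $p$-subset of $B$ is some $a_{j_k}$, every such $p$-subset is sent to $c$ by $\psi$, so $\psi\vert_{[B]^p}$ is constant.

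The only genuine obstacle, modest as it is, lies in the backward bookkeeping that defines $N$: the recursion above iterates the quantity $N(p-1, q, \cdot)$ roughly $q(r-1)$ times, and one must verify that the resulting $N$ keeps every $X_i$ nonempty all the way to stage $M-1$. This is routine but does show that the explicit growth of $N(p, q, r)$ is tower-like in $p$, which is what the standard proof delivers; since the statement only asserts existence of \emph{some} $N$, no effort to optimize the bound is needed.
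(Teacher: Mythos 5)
The paper does not prove this statement: Theorem \ref{Ramsey} is imported as a classical result with a citation to Todorcevic's book and is used only as a black box in the proof of Lemma \ref{LemaRamsey}, so there is no proof in the paper to compare yours against. Your argument is the standard proof of the finite Ramsey theorem by induction on the exponent $p$ (peeling off minima and then applying the pigeonhole principle to the resulting ``end-colors''), and it is essentially correct: the base case, the recursive definition of $N$, the key observation that $\psi(\{a_{i_0},\dots,a_{i_{p-1}}\})=\gamma_{i_0}$, and the final pigeonhole step are all the right ingredients. Two small bookkeeping points should be tightened. First, as written you only define $\gamma_i$ for $i<M-1$, yet you apply the pigeonhole principle to ``$\gamma_0,\dots,\gamma_{M-1}$''; either run the chain one step further (build $X_0\supset\dots\supset X_M$, so that all $M$ colors $\gamma_0,\dots,\gamma_{M-1}$ exist) or observe that the largest index in $B$ never serves as the minimum of a $p$-subset when $p\ge 2$, so a monochromatic class of size $r-1$ among $\gamma_0,\dots,\gamma_{M-2}$ suffices once you append $a_{M-1}$ to $B$. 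Second, your recursion bottoms out at $n_{M-1}=1$ and therefore invokes $N(p-1,q,n_{i+1})$ with $n_{i+1}<p-1$ when $p\ge 3$, a case the inductive statement (with its hypothesis $p\le r$) does not literally cover; either start the recursion at $n_{M-1}=p-1$ or note that the theorem is vacuously true when $r<p$ because $[B]^p=\emptyset$. Neither point affects the substance of the argument.
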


Given any set $A$, we write $\Delta_A$ for the diagonal in $A\times A$.

\begin{lem} \label{LemaRamsey}
For every $a \in \mathbb{N}$, there exists $N = N(a) \in \mathbb{N}$ such that for every map
$$c: N \times N \setminus \Delta_N \longrightarrow a,$$
there exist $i<j<k \in N$ such that
$$c(i,j) = c(i,k) \text{ and } c(k,i) = c(k,j).$$
\end{lem}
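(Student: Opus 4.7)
The plan is to reduce the lemma directly to Ramsey's theorem (Theorem \ref{Ramsey}) by encoding all relevant color information of each ordered triple $i<j<k$ into a single color on the unordered $3$-element set $\{i,j,k\}$. Concretely, I would set $p=3$, $q=a^4$, $r=4$, and let $N=N(3,a^4,4)$ be the Ramsey number supplied by Theorem~\ref{Ramsey}. Given $c:N\times N\setminus \Delta_N\to a$, I define
\[ \psi:[N]^3\To a^4,\qquad \psi(\{i,j,k\})=\bigl(c(i,j),\,c(i,k),\,c(k,i),\,c(k,j)\bigr), \]
where in each set the elements are listed in increasing order $i<j<k$. Ramsey then yields some $B\in [N]^4$ on which $\psi$ is constant, with common value $(\alpha,\beta,\gamma,\delta)\in a^4$.

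The second step is to exploit this monochromaticity by comparing the four triples contained in $B=\{i_1<i_2<i_3<i_4\}$. The first coordinate of $\psi(\{i_1,i_2,i_3\})$ is $c(i_1,i_2)=\alpha$, while the first coordinate of $\psi(\{i_1,i_3,i_4\})$ is $c(i_1,i_3)=\alpha$; on the other hand the second coordinate of $\psi(\{i_1,i_2,i_3\})$ gives $c(i_1,i_3)=\beta$. Hence $\alpha=\beta$. Dually, the fourth coordinate of $\psi(\{i_1,i_2,i_4\})$ reads $c(i_4,i_2)=\delta$, whereas the third coordinate of $\psi(\{i_2,i_3,i_4\})$ reads $c(i_4,i_2)=\gamma$, so $\gamma=\delta$.

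Having obtained $\alpha=\beta$ and $\gamma=\delta$, the conclusion follows by choosing any $i<j<k$ in $B$, for instance $i=i_1,\ j=i_2,\ k=i_4$: the defining equality $\psi(\{i,j,k\})=(\alpha,\beta,\gamma,\delta)$ directly yields $c(i,j)=\alpha=\beta=c(i,k)$ and $c(k,i)=\gamma=\delta=c(k,j)$, as required.

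The only real decision in this argument is choosing the right encoding $\psi$ and the right target size $r$. Encoding fewer than four coordinates would not record enough information to force the two pairs of equalities simultaneously, while a homogeneous set of size less than $4$ would give no equations at all (a single $3$-element set carries just one signature). With the choice $r=4$ the two overlapping pairs of triples sharing either the smallest or the largest element deliver precisely the two identifications $\alpha=\beta$ and $\gamma=\delta$. I expect the main (mild) obstacle to be organising the four signatures carefully so that one is not misled by the fact that a given index plays different roles (minimum, middle, maximum) in different triples; beyond that, the proof is a routine application of Ramsey's theorem.
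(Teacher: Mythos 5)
Your proof is correct and takes essentially the same route as the paper: color unordered triples by the relevant values of $c$, apply Ramsey's theorem to get a homogeneous set, and compare overlapping triples inside it. The only difference is bookkeeping --- you record all four values $(c(i,j),c(i,k),c(k,i),c(k,j))$ with $a^4$ colors and need only a homogeneous set of size $4$ (after which any triple works), while the paper records just $(c(i,j),c(k,j))$ with $a^2$ colors and compensates by taking a homogeneous set of size $5$ and choosing a particular middle triple.
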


\begin{proof}
We shall check that the Ramsey number  $N = N(3,a^2,5)$ given by Theorem \ref{Ramsey} has the required
property.
Fix any function $c: N \times N \setminus \Delta_N \longrightarrow a$.

Let $\varphi: \left[N\right]^3 \longrightarrow a^2$ be the map given by
$$\varphi(\{ i,j,k\}) = \left(c(i,j),c(k,j)\right),$$
whenever  $\{i,j,k\} \in \left[N\right]^3$ and $i<j<k$.
By Theorem \ref{Ramsey}, there exists $B \in \left[N\right]^5$ such that $\varphi$ is constant  on
${\left[B\right]^3}$. Write $B = \set{b_1, \ldots, b_5}$ so that   $b_1 < \ldots < b_5$.

We now check that   $b_2$, $b_3$,  $b_4$ is the triple satisfying the assertion of the lemma. Since
$$\varphi(\{b_2,b_3,b_4\}) = \varphi(\{b_2,b_4,b_5\}),$$
we get  $c(b_2,b_3) = c(b_2,b_4)$ by the definition of $\varphi$.

Analogously, since $$\varphi(\{b_1,b_3,b_4\}) = \varphi(\{b_1,b_2,b_4\}),$$
we conclude that $c(b_4,b_3) = c(b_4,b_2)$, and the proof is complete.
\end{proof}

Let us now fix a Banach space  $E$ and consider the compact space $K = (B_{E^*}, w^*)$.

\begin{thm} \label{phf1}
There is a countable decomposition $X = \bigcup_{\nu\in\mathbb{N}}X_\nu$ of the family $$X = \left\{ f\in C(K) : f\vert_{\frac{1}{3}K} \neq 0 \right\}$$
such that for every $\mathcal{G}\subset X_\nu$ of cardinality $\nu$ there exist two distinct $f, g \in \mathcal{G}$ such that $f \cdot g \neq 0$.
\end{thm}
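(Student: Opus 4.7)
I would begin with a rational parameterization of $X$ and then apply the Ramsey machinery of Lemma~\ref{LemaRamsey} to each piece.

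\textbf{Step 1: Rational parameterization.} For each $f\in X$, fix a witness $x^\ast_f\in \tfrac{1}{3}K$ with $f(x^\ast_f)\neq 0$. By weak$^\ast$-continuity at $x^\ast_f$ there exist vectors $z_1^f,\ldots,z_m^f\in E$, rationals $r_1,\ldots,r_m\in\mathbb{Q}$, $\eta,\delta\in\mathbb{Q}^+$, and a sign $\sigma\in\{+,-\}$ such that $x^\ast_f\in V^f$ and $\sigma f>\delta$ throughout
\[
V^f=\{y^\ast\in B_{E^\ast}:\ |y^\ast(z_i^f)-r_i|<\eta,\ i=1,\ldots,m\}.
\]
The ``countable'' data $\tau=(m,r_1,\ldots,r_m,\eta,\delta,\sigma)$ ranges over a countable set and gives $X=\bigcup_\tau X_\tau$. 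The vectors $z_i^f$ themselves are left unconstrained, which is crucial because $E$ need not be separable.

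\textbf{Step 2: Easy witness-crossing.} If distinct $f,g\in X_\tau$ satisfy $x^\ast_f\in V^g$, then both $\sigma f(x^\ast_f)$ and $\sigma g(x^\ast_f)$ exceed $\delta$, so $(f\cdot g)(x^\ast_f)>\delta^2>0$ and hence $f\cdot g\neq 0$. It therefore suffices to force such a \emph{witness-crossing} pair in any sufficiently large subfamily of $X_\tau$.

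\textbf{Step 3: Ramsey step and the main obstacle.} For a fixed type $\tau$, set $\nu(\tau):=N(3^m)$, the Ramsey number from Lemma~\ref{LemaRamsey}. Given $\{f_1,\ldots,f_{\nu(\tau)}\}\subset X_\tau$ with no witness-crossing pair, define a coloring $c$ on ordered pairs $(j,k)$, $j\neq k$, by the element of $\{+,-,\circ\}^m$ recording, for each coordinate $i$, whether $x^\ast_{f_j}(z_i^{f_k})$ lies above $r_i+\eta$, below $r_i-\eta$, or in between. Lemma~\ref{LemaRamsey} then produces $i<j<k$ with $c(i,j)=c(i,k)$ and $c(k,i)=c(k,j)$. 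The task is then to leverage this matched pattern to construct an explicit point $y^\ast\in V^{f_j}\cap V^{f_k}$, for instance as a linear combination of the three witnesses $x^\ast_{f_i},x^\ast_{f_j},x^\ast_{f_k}$ (rescaled if necessary to remain in $B_{E^\ast}$) with coefficients tuned so that on each coordinate the matched $c$-values let the ``bad'' contributions cancel. This construction is the principal obstacle; I anticipate it will require refining Step~1 so that $\tau$ includes uniform rational upper bounds on $\|z_i^f\|$, together with a finer coloring that quantizes the values $x^\ast_{f_j}(z_i^{f_k})$ into rational intervals rather than just the three regions above.

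\textbf{Step 4: Re-indexing.} Enumerate the countably many types diagonally as $\tau_2,\tau_3,\ldots$ so that $\nu(\tau_n)\leq n$ for each $n$, and set $X_n:=X_{\tau_n}$. Since the $n$-chain property is preserved when passing to a larger index, this yields the countable decomposition $X=\bigcup_{n\in\mathbb{N}}X_n$ required by the theorem.
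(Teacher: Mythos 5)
There is a genuine gap, and it sits exactly where you flag the ``principal obstacle'': you never construct the point at which two of the functions are simultaneously nonzero, and the colouring you actually define cannot produce it. Recording only whether $x^\ast_{f_j}(z_i^{f_k})$ lies above $r_i+\eta$, below $r_i-\eta$, or in between is too coarse: two values landing in the middle region may still differ by almost $2\eta$, so matched colours give no cancellation that would place any candidate combination of the witnesses inside $V^{f_j}\cap V^{f_k}$. Also, the parenthetical ``rescaled if necessary to remain in $B_{E^\ast}$'' would be fatal rather than harmless: the sets $V^f$ are not invariant under scaling, so rescaling destroys exactly the membership you are trying to arrange. Your Step 2 (witness-crossing), while correct, ends up playing no role in the plan of Step 3 and does not close the gap either.

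The paper fills this gap with two ideas you are missing. First, it keeps the neighbourhoods centred at the witnesses themselves, $U_f=V_{x^\ast_f}(x_1^f,\ldots,x_n^f,\delta)$ with $\delta$ rational, and colours the ordered pair $(a,b)$ by the fine quantization of the actual values $x^\ast_b(x_p^a)$: partition $[-1,1]$ into $m$ intervals of diameter less than $\delta$ and record, for each $p\le n$, which interval contains $x^\ast_b(x_p^a)$, giving $m^n$ colours (no rational centres $r_i$ are needed; Ramsey absorbs the non-rational data). Second, once Lemma~\ref{LemaRamsey} yields $i<j<k$ with $c(i,j)=c(i,k)$ and $c(k,i)=c(k,j)$, it takes the explicit point $y^\ast=x_i^\ast-x_j^\ast+x_k^\ast$, which lies in $K$ with no rescaling precisely because all three witnesses lie in $\frac{1}{3}K$ --- this is the whole purpose of the $\frac{1}{3}$. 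The identity $x_i^\ast(x_p^i)-y^\ast(x_p^i)=x_j^\ast(x_p^i)-x_k^\ast(x_p^i)$ together with $c(i,j)=c(i,k)$ gives $\abs{x_i^\ast(x_p^i)-y^\ast(x_p^i)}<\delta$ for all $p$, i.e.\ $y^\ast\in U_i$, and symmetrically $c(k,i)=c(k,j)$ gives $y^\ast\in U_k$; hence $f_i(y^\ast)\neq 0\neq f_k(y^\ast)$. So the missing content is the choice of the point $x_i^\ast-x_j^\ast+x_k^\ast$ combined with $\delta$-fine interval quantization instead of your three-region colouring; as written, your argument does not prove the theorem, although your Step 1 and Step 4 (countable typing by rational data and the re-indexing of the pieces) do match the paper's framework.
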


\begin{proof}
What we are going to find is a countable decomposition $X=\bigcup_{w\in W} Y_w$, indicated on a suitable countable set $W$, together with a function $M:W\To \mathbb{N}$ such that for every $\mathcal{G}\subset Y_w$ of cardinality $M(w)$ there exist two distinct $f, g \in \mathcal{G}$ such that $f \cdot g \neq 0$. From such a decomposition we can define one like stated in the theorem, by picking either $X_\nu = \emptyset$ or $X_\nu = Y_w$ where $w$ is the least element (in some enumeration of $W$) that has not been previously chosen and $M(w)<\nu$.

Recall that for $x^*\in K$, sets of the form
$$V_{x^*}(x_1\ldots,x_n,\delta) = \set{y^* \in K : \abs{y^*(x_i)-x^*(x_i)}<\delta \text{ for every } i = 1,\ldots,n},
$$
where $x_1, \ldots, x_n \in E$ and $\delta > 0$, form a base for the $weak^*$ topology at $x^*\in K$.

For every $f\in X$ we have $f\vert_{\frac{1}{3}K} \neq 0$, so there is
$x_{f}^* \in E^*$ such that $\|x_{f}^*\|\le 1/3$ and $\abs{f(x_{f}^*)} > 0$. Without loss of generality we can assume that  there is  $\eps>0$ such that
$\abs{f(x_{f}^*)} > \eps$.

Every function $f\in X$ is $weak^*$ continuous at $x_f^*$ so there
is a $weak^*$ neighbourhood $U_f$ of $x_f^*$ such that
$\abs{f(y^*) - f(x_{f}^*)} < \eps/{2}$ for $y^*\in U_f$.
We may assume that every $U_f$ is a basic neighbourhood determined by $n_f$ vectors
from $E$ and some $\delta_f>0$ that can be supposed to be rational. Our index set will be $W = \mathbb{N}\times \mathbb{Q}$ and $Y(n,\delta) = \{f\in X : n_f = n, \delta_f = \delta\}$.

So we fix $w=(n\delta)$,  and what we have is that for every $f\in Y_w$ there exist $x_1^{f}, \ldots, x_n^{f} \in B_E$ satisfying

\begin{enumerate}[(i)]
\item $\abs{f(x_{f}^*)} \ge  \eps$ for every $f\in Y_w$;
\item writing $U_f=V_{x_{f}^*}(x_1^{f}, \ldots, x_n^{f}, \delta)$, we have
$\abs{f(y^*) - f(x_{f}^*)} < {\eps}/{2}$
 for every $y^* \in U_f$.
\end{enumerate}

In order to complete the proof it is enough to show that there is a large enough number $N$ (that will be our $M(w)$) that satisfies the following claim:
\medskip

\noindent {\sc Claim A.} Suppose that $\{f_0,\ldots,f_{N-1}\}\sub Y_w$.
There exist $0 \leq i < j < k \leq N-1$ such that for $y^{*} = x_i^* - x_j^* + x_k^* \in K$ we have  $f_i(y^*) \neq 0$ and $f_k(y^*) \neq 0$.
\medskip

Indeed, the general case follows then by reindexing the functions in question. In turn, Claim A follows from the following.
\medskip

\noindent {\sc Claim B.}
In the setting of Claim A, there are $0 \leq i < j < k \leq N-1$ such that
$$y^{*} = x_i^* - x_j^* + x_k^* \in U_i\cap U_k.$$

Indeed, if  $y^* \in U_i\cap U_k$ then

  $$\left.
 \begin{array}{rcl}
      \abs{f_i(x_i^*) - f_i(y^*)} & < & {\eps}/{2}
   \\ \abs{f_i(x_i^{*})} & > & \eps
 \end{array}
 \right\} \Rightarrow \abs{f_i(y^*)} > {\eps}/{2} \Rightarrow f_i(y^*) \neq 0,$$
and  $f_k(y^*) \neq 0$  for the same reason. To complete the proof we shall now verify Claim B.

Write $[-1,1] = \bigcup_{s=0}^{m-1} I_s$, where  $I_s$ are pairwise disjoint intervals (with or without endpoints)
 of diameter less than $\delta$.

The number $N=M(w)$ that we need to take is the number $N = N(m^n)$ given by Lemma \ref{LemaRamsey}. Consider
the mapping
$$c: N \times N \setminus \Delta_N \longrightarrow m^n,\quad  c(a,b) = \left(c_1(a,b), \ldots, c_{n}(a,b)\right),$$
 where for every $p\le  n$, the value of  $0 \leq c_p(a,b) \leq m-1$ is defined by the condition
 $x_b^*(x_p^{a}) \in I_{c_p(a,b)}$.

By Lemma \ref{LemaRamsey}, there exist $i < j < k \leq N-1$ such that $$c(i,j) = c(i,k) \text{ and } c(k,i) = c(k,j).$$

As $c(i,j) = c(i,k)$, for every $p\le n$ we have  $ \abs{x_j^*(x_p^{i}) - x_k^*(x_p^{i})} < \delta$, and
$$\abs{x_i^{*}(x_p^{i}) - y^*(x_p^{i})} = \abs{x_j^{*}(x_p^{i}) - x_k^*(x_p^{i})} < \delta,$$
which means that $ y^* \in V_{x_i^*}(x_1^{i}, \ldots, x_n^{i}, \delta)=U_i$.

In the same manner,  from $c(k,i) = c(k,j)$ we get  $\abs{x_i^*(x_p^{k}) - x_j^*(x_p^{k})} < \delta$, and
$$\abs{x_k^{*}(x_p^{k}) - y^*(x_p^{k})} = \abs{x_i^{*}(x_p^{k}) - x_j^*(x_p^{k})} < \delta.$$
Again, this means that  $ y^* \in V_{x_k^*}(x_1^{k}, \ldots, x_n^{k}, \delta)=U_k$, and this verifies Claim B.
\end{proof}

Theorem~\ref{mainhomo} follows immediately from Theorem \ref{phf1}, because all positive elements of $C_{ph}(K)$ satisfy $f\vert_{\frac{1}{3}K} \neq 0$. As it was observed in the introduction, Theorem~\ref{main1} follows from Theorem~\ref{mainhomo}.\\

\section{Concluding remarks}

 Let us note that  the proof of Theorem~\ref{phf1} works even if we replace  $B_{E^\ast}$ by any
  weak$^\ast$-closed and absolutely convex subset $K$ of $B_{E^\ast}$. 
  The only delicate point in the proof that one has to be careful about is that  the vector $y^{*} = x_i^* - x_j^* + x_k^*$ chosen in Claim B  is still an element of $K$ and this
  is guaranteed by  
   $x_i^*, x_j^*,x_k^*\in\frac{1}{3}K$. Thus, Theorem~\ref{mainhomo} may be stated as follows.
    
 \begin{thm}
Given a  Banach space $E$ and a weak$^\ast$-closed  absolutely convex set $K\subset B_{E^\ast}$, the lattice
$C_{ph}(K)$ satisfies  the $\sigma$-bounded chain condition.
 \end{thm}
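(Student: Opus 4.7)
The plan is to rerun the proof of Theorem~\ref{phf1} essentially verbatim, with $K$ in place of $B_{E^\ast}$, and then pinpoint the unique place that uses the structure of the unit ball, checking that absolute convexity of $K$ suffices there as well.

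First, I would reduce the statement to the $K$-version of Theorem~\ref{phf1}. For a nonzero positive $f\in C_{ph}(K)$ we have some $x^\ast\in K$ with $f(x^\ast)\neq 0$; absolute convexity of $K$ gives $\frac{1}{3}K\sub K$, and positive homogeneity yields $f(\tfrac{1}{3}x^\ast)=\tfrac{1}{3}f(x^\ast)\neq 0$, so $f|_{\frac{1}{3}K}\neq 0$. It therefore suffices to exhibit a decomposition $X=\bigcup_\nu X_\nu$ of the class $X=\{f\in C(K):f|_{\frac{1}{3}K}\neq 0\}$ with the same product-nonvanishing property asserted in Theorem~\ref{phf1}. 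Then I would repeat the construction: for each $f\in X$ pick $x_f^\ast\in\frac{1}{3}K$ and a rational $\eps_f>0$ with $|f(x_f^\ast)|>\eps_f$, and a basic $w^\ast$-neighbourhood $U_f=V_{x_f^\ast}(x_1^f,\ldots,x_{n_f}^f,\delta_f)\cap K$ with $\delta_f\in\mathbb{Q}$, $x_i^f\in B_E$, and $|f(y^\ast)-f(x_f^\ast)|<\eps_f/2$ on $U_f$. Grouping by $w=(n_f,\delta_f,\eps_f)$ over the countable index set $W=\mathbb{N}\times\mathbb{Q}\times\mathbb{Q}$, and taking $M(w)=N(m^n)$ for $m$ large enough that $[-1,1]$ splits into $m$ intervals of diameter $<\delta$ (with $N$ from Lemma~\ref{LemaRamsey}), I would reuse the Ramsey colouring $c(a,b)=(c_1(a,b),\ldots,c_n(a,b))$ defined by $x_b^\ast(x_p^a)\in I_{c_p(a,b)}$ to produce indices $i<j<k$ with $c(i,j)=c(i,k)$ and $c(k,i)=c(k,j)$. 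The same computation as in Theorem~\ref{phf1} then shows that $y^\ast:=x_i^\ast-x_j^\ast+x_k^\ast$, viewed as a point in $E^\ast$, lies in the $w^\ast$-neighbourhoods $V_{x_i^\ast}(x_1^i,\ldots,x_n^i,\delta)$ and $V_{x_k^\ast}(x_1^k,\ldots,x_n^k,\delta)$ of $E^\ast$.

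The one subtle point — and, as the authors flag, the only genuinely new step — is checking that $y^\ast$ actually belongs to $K$, so that these neighbourhoods coincide with $U_i$ and $U_k$ and the bounds $|f_i(y^\ast)|>\eps_i/2$, $|f_k(y^\ast)|>\eps_k/2$ follow as before. This is where absolute convexity of $K$ does the work: writing $x_\ell^\ast=\tfrac{1}{3}u_\ell^\ast$ with $u_\ell^\ast\in K$, balancedness gives $-u_j^\ast\in K$, and
\[ y^\ast=\tfrac{1}{3}u_i^\ast+\tfrac{1}{3}(-u_j^\ast)+\tfrac{1}{3}u_k^\ast \]
is then a convex combination of elements of $K$, hence in $K$. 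With $y^\ast\in K$ secured, the rest of the argument is identical to that of Theorem~\ref{phf1}. I expect no further obstacles: weak$^\ast$-closedness of $K$ is used only to make $K$ compact so that $C(K)$ is the natural ambient space, and absolute convexity is used precisely once, to close the ternary combination $y^\ast$ inside $K$.
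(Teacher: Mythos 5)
Your proposal is correct and follows essentially the same route as the paper: the authors likewise observe that the proof of Theorem~\ref{phf1} carries over verbatim, the only delicate point being that $y^{*}=x_i^{*}-x_j^{*}+x_k^{*}$ still lies in $K$, which is exactly the absolute-convexity argument you give via $x_i^{*},x_j^{*},x_k^{*}\in\frac{1}{3}K$. Your slightly more careful bookkeeping (making $\eps_f$ rational and part of the countable index) is a harmless refinement of the same argument.
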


The question arises if there are natural stronger chain conditions that would hold in $C_{ph}(B_{E^*})$, and so in $FBL[E]$, 
for every Banach space $E$.

The $\sigma$-bounded chain condition was introduced by Horn and Tarski in connection with Boolean algebras carrying a strictly positive measures.
It is worth recalling that the related Horn-Tarski problem, whether the condition $\sigma$-{\em bcc} is equivalent to its certain formally weaker version was solved in the negative
only a few years ago by Th\"{u}mmel \cite{Th14} and Todorcevic \cite{To14}.
 
Suppose that $\fA$ is a Boolean algebra and $\mu:\fA\to [0,1]$ is a finitely additive probability measure such that $\mu(a)>0$ for every $a\in\fA^+$.
Then we can write 
\[ \fA^+=\bigcup_{n\ge 2} \cF_n, \mbox{ where } \cF_n=\{a\in\fA: \mu(a)>1/n\}.\] 
Clearly, 
$\cF_n$ contains no $n$ many pairwise disjoint elements, so $\fA$ satisfies the $\sigma$-bounded chain condition. 
This cannot be reversed,  there are algebras with $\sigma${\em -bcc} not carrying strictly positive measures; cf.\ Chapter 6 of \cite{CN82}.

If $X$ is a sublattice of the space $C(K)$ for some compact space $K$ then one can think of  an analogous chain-like condition, stating that there is a finitely additive probability  measure $\mu$ on $K$
which is {\em strictly positive on} $X^+$, that is  $\int_K f\;{\rm d}\mu>0$ for every $f\in X^+$. 
Note that to have $\int_K f\;{\rm d}\mu$ well-defined for every continuous function $f$ we need only  to assume that   the domain of $\mu$ contains the algebra $\cA(K)$ generated by
closed subsets of $K$.  
Once we have such $\mu$, it is not difficult to verify the condition $\sigma$-{\em bcc}. 
Let us first observe that 
whether the measure in question is actually countably additive or merely finitely additive is not essential here.

\begin{rem}\label{remark}
Suppose that $\mu$ is finitely additive
probability measure which is strictly positive on $X^+$ for some sublattice $X$ of the lattice $C(K)$ of continuous functions on a compact space $K$.
Then there is a countably additive Borel measure $\mu'$ on $K$ which is again strictly positive on $X^+$.

For $f\in X^+$ write  $\eps=\int_K f\;{\rm d}\mu$ and $A=\{x\in K : f(x)\ge \eps/2\}$; then
\[ \eps=\int_{A}f\;{\rm d}\mu+\int_{K\setminus A}f\;{\rm d}\mu\le \|f\|_\infty\cdot\mu(A)+\eps/2,\]
which gives $\mu(A)>0$. This implies that whenever  a finitely additive measure ${\mu}'$ satisfies 
${\mu}'(A)\ge\mu(A)$ for every closed $A\subset K$ then again $\int_K f\;{\rm d}\mu'>0$ for every $f\in X^+$.
Now the point is that there is such $\mu'$ that is closed-inner-regular on the algebra  $\cA(K)$, see \cite{Pl86};
$\mu'$ is then countably additive (by compactness) and, consequently,  extends to a countable additive Borel measure on $K$ which is
positive on $X^+$.
\end{rem}

Using Remark \ref{remark} it is not difficult to give an example showing that the $\sigma$-bounded chain condition that holds in every  $FBL[E]$ does not admit the obvious measure-theoretic strengthening mentioned above.  

\begin{example}
Consider the Banach space $E=c_0(\Gamma)$, where $\Gamma$ is a uncountable set; then $E^*=\ell_1(\Gamma)$. 
There is no measure on $K=B_{E^*}$ which would be positive on all elements from $C_{ph}(K)^+$. 

Indeed,  every $\gamma\in\Gamma$ defines $f_\gamma\in C_{ph}(K)^+$, where $f_\gamma(x)=|x_\gamma|$. Suppose that $\mu$ is a measure on $K$
such that $\int_K  f_\gamma\;{\rm d}\mu>0$ for every $\gamma$. By Remark \ref{remark} we can assume that $\mu$ is countable additive.
Then for every $\gamma$ there is $\delta(\gamma)>0$ such that 
\[\mu\left(\{x\in K: f_\gamma(x)\ge\delta(\gamma)\}\right)>0.\]
Using the fact  that $\Gamma$ is unountable, we conclude easily that  there is $\delta>0$ and a sequence of distinct $\gamma_n\in\Gamma$ such that,
writing $A_n=\{x\in K: f_{\gamma_n}(x)\ge\delta\}$, we have $\mu(A_n)\ge\delta$. But then $\mu(\bigcap_n\bigcup_{k\ge n} A_k)\ge\delta$; in particular,
there is $x\in K$ belonging to infinitely many sets $A_n$. This clearly contradicts the fact that $x\in K\sub \ell_1(\gamma)$.
\end{example}

\end{document}